\documentclass[11pt,a4paper]{article}

\usepackage[a4paper,margin=2.48cm]{geometry}
\usepackage{amsmath,amssymb,amsthm,mathtools}
\usepackage{microtype}
\usepackage{enumitem}
\usepackage{xcolor}
\usepackage[unicode]{hyperref}
\usepackage[nameinlink,noabbrev]{cleveref}

\definecolor{linkblue}{RGB}{34,76,120}
\hypersetup{
  hidelinks,
  pdftitle={Reconstruction of Torsion-Free Abelian Groups from Rational Group Fields},
  pdfauthor={Jinyu Lin, Xiaodong Wang}
}

\setlist{nosep,leftmargin=2.3em}
\numberwithin{equation}{section}
\allowdisplaybreaks

\theoremstyle{plain}
\newtheorem{theorem}{Theorem}[section]
\newtheorem{lemma}[theorem]{Lemma}
\newtheorem{proposition}[theorem]{Proposition}
\newtheorem{corollary}[theorem]{Corollary}
\theoremstyle{definition}
\newtheorem{definition}[theorem]{Definition}
\theoremstyle{remark}
\newtheorem{remark}[theorem]{Remark}

\crefname{theorem}{theorem}{theorems}
\crefname{lemma}{lemma}{lemmas}
\crefname{proposition}{proposition}{propositions}
\crefname{corollary}{corollary}{corollaries}
\crefname{definition}{definition}{definitions}
\crefname{remark}{remark}{remarks}
\crefname{equation}{equation}{equations}

\newcommand{\Q}{\mathbb Q}
\newcommand{\Z}{\mathbb Z}
\newcommand{\N}{\mathbb N}
\newcommand{\Frac}{\operatorname{Frac}}
\newcommand{\trdeg}{\operatorname{trdeg}}
\newcommand{\rk}{\operatorname{rank}}
\newcommand{\Kk}[1]{K_k(#1)}
\newcommand{\KQ}[1]{K_{\Q}(#1)}
\newcommand{\Dk}[1]{\Delta_k(#1)}
\newcommand{\DQ}[1]{\Delta_{\Q}(#1)}
\newcommand{\mon}[1]{X^{#1}}

\title{\bfseries Reconstruction of Torsion-Free Abelian Groups\\
from Rational Group Fields}
\author{%
  Jinyu Lin\textsuperscript{1}, Xiaodong Wang\textsuperscript{2}\\[0.55em]
  \small\textsuperscript{1}\,Taishan College, Shandong University, Jinan, China\\
  \small\textsuperscript{2}\,School of Mathematical Sciences,
  Ocean University of China, Qingdao, China\\[0.25em]
  \small\textsuperscript{1}\,\href{mailto:202400171064@mail.sdu.edu.cn}{202400171064@mail.sdu.edu.cn},
  \textsuperscript{2}\,\href{mailto:wxd8510@stu.ouc.edu.cn}{wxd8510@stu.ouc.edu.cn}%
}
\date{}

\begin{document}

\maketitle

\begin{abstract}
For a torsion-free abelian group $G$, let
\[
  K_{\Q}(G)=\Frac \Q[G]
\]
be the fraction field of its rational group algebra.  We prove that this
field determines the group up to isomorphism:
\[
  K_{\Q}(G)\cong K_{\Q}(H) \quad\Longleftrightarrow\quad G\cong H.
\]
The main structural input is that, over every field $k$ of characteristic
zero, the monomial defect group
\[
  \Delta_k(G)=K_k(G)^\times/(k^\times X^G)
\]
is free abelian.  We give a self-contained proof.  It first treats the
one-variable Puiseux field $F(t^{\Q})$: factorization from
$F(t^{1/n!})$ to $F(t^{1/(n+1)!})$ gives split inclusions because the
substituted irreducibles are square-free in characteristic zero.  A
transfinite decomposition of the divisible hull $\Q\otimes G$ then proves
the general case.

Given a field isomorphism, we compare the two monomial subgroups inside the
common multiplicative group.  Their intersection produces isomorphic
subgroups $M\leq G$ and $N\leq H$, while the quotients $G/M$ and $H/N$
embed in free defect groups and are therefore free.  Relative transcendence
degree shows that these two free quotients have the same rank, completing the
reconstruction.  As consequences, rational-function stabilization of group
fields exactly records free stabilization of groups, and Rickard's bounded
sequence group yields a field $F$ with $F\cong F(x,y)$ but
$F\not\cong F(x)$.
\end{abstract}

\noindent\textbf{Keywords:} torsion-free abelian groups; group algebras;
fields of fractions; multiplicative groups; free abelian groups; Puiseux
fields; reconstruction

\section{Introduction}

The torsion-free abelian groups $G,H,V$, and their subgroups are written
additively.  Unit groups of fields and their quotient groups are written
multiplicatively unless we explicitly pass to additive valuation coordinates.
For a field $k$ and a torsion-free abelian group $G$, the group algebra $k[G]$ has the formal
monomials $\mon g$, $g\in G$, as a $k$-basis, with
\[
  \mon g\mon h=\mon{g+h}.
\]
Since every torsion-free abelian group is orderable, $k[G]$ is an integral
domain.  We put
\[
  \Kk G=\Frac k[G].
\]

Rickard asked whether nonisomorphic torsion-free abelian groups can have
isomorphic rational group fields~\cite{RickardMO2017}.  The question is
particularly natural in infinite rank.  Bastos and Viswanathan proved, among
other closely related statements, that $\KQ G$ is purely transcendental over
$\Q$ precisely when $G$ is free abelian~\cite{BastosViswanathan1988}.  This
does not by itself determine whether arbitrary group fields reflect
isomorphism.

Our main result gives a negative answer to Rickard's question.

\begin{theorem}[Reconstruction theorem]\label{thm:intro-main}
For all torsion-free abelian groups $G$ and $H$,
\[
  \KQ G\cong\KQ H
  \quad\Longleftrightarrow\quad
  G\cong H.
\]
The field isomorphism is not assumed to preserve either group algebra or its
standard monomials.
\end{theorem}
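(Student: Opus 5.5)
The direction $G\cong H\Rightarrow \KQ G\cong\KQ H$ is immediate: a group isomorphism induces a $\Q$-algebra isomorphism $\Q[G]\to\Q[H]$ on monomials, and hence an isomorphism of fraction fields. The substance is the converse. My plan follows the strategy announced in the abstract. Fix a field isomorphism $\varphi\colon\KQ G\to\KQ H$ and write $L=\KQ H$. Then $L^\times$ contains two subgroups, $A=\Q^\times X^H$ and $B=\varphi(\Q^\times X^G)$. Since $\varphi$ fixes $\Q$ (the prime field), $\Q^\times$ lies in both and $B/\Q^\times\cong G$, while $A/\Q^\times\cong H$. The key structural input is that both defect groups $\DQ G=\KQ G^\times/(\Q^\times X^G)$ and $\DQ H$ are free abelian. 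I would take this as the main earlier theorem, proved via the Puiseux-field splitting and the transfinite decomposition of $\Q\otimes G$.

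Set $C=A\cap B\supseteq\Q^\times$. Let $N=C/\Q^\times\le H$ and $M=\varphi^{-1}(C)/\Q^\times\le G$, so that $M\cong N$ via $\varphi$. Now $G/M\cong B/C\cong AB/A$ embeds in $L^\times/A=\DQ H$. A subgroup of a free abelian group is free, so $G/M$ is free; symmetrically $H/N$ is free. Hence $G\cong M\oplus F_1$ and $H\cong N\oplus F_2$, with $F_1,F_2$ free, because extensions by free groups split. It remains to prove $\rk F_1=\rk F_2$, and then $G\cong H$ follows.

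For the rank comparison I would use relative transcendence degree. Let $E_A=\Q(C)$ be the subfield of $L$ generated by $C$, which is $\varphi$ of the subfield generated by $X^M$. Then $L=E_A(X^H)$ is generated over $\Q(X^N)$ by the monomials $X^{h}$ for $h$ ranging over lifts of a basis of $F_2$. Those lifts are algebraically independent over $\Q(X^N)$, because $\Q[N\oplus F_2]=\Q[N][F_2]$ is a Laurent polynomial ring over $\Q[N]$. So $\trdeg(L/\Q(X^N))=\rk F_2$. Likewise $\varphi$ transports $\KQ G$ over $\Q(X^M)$ to $L$ over $\varphi(\Q(X^M))$, giving $\trdeg=\rk F_1$. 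The main obstacle is that $\Q(X^N)$ and $\varphi(\Q(X^M))$ are both generated by the same multiplicative group $C$. Since $C\subseteq A$, the subfield generated by $C$ is $\Q(X^N)$, and since $C\subseteq B$, it is also $\varphi(\Q(X^M))$. So they are literally the same subfield $E$ of $L$, and therefore $\rk F_1=\trdeg(L/E)=\rk F_2$.

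The one care point is infinite rank. Transcendence degree is a well-defined cardinal, so the equality of cardinals still holds and gives $F_1\cong F_2$. I expect the genuinely hard step to be the freeness of the defect groups, which I take as given from earlier in the paper. Within this proof, the delicate point is checking that $C$ generates the same subfield from both sides and that $N\oplus F_2$ gives a transcendence basis; both are routine.
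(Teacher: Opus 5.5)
Your proposal is correct and follows the paper's proof essentially step for step. Your intersection $C=A\cap B$ is exactly $\Q^\times X^N=\varphi(\Q^\times X^M)$ for the paper's subgroups $M,N$. Your embedding $G/M\cong B/C\cong AB/A\hookrightarrow \DQ H$ plays the role of the paper's map $\delta_\varphi$, and your observation that $C$ generates the same subfield from both sides is the paper's identity $\varphi(\KQ M)=\KQ N$. The only cosmetic difference is that you compute the relative transcendence degrees directly from the internal splittings (e.g.\ $\Q[H]$ is a Laurent polynomial ring over $\Q[N]$) instead of citing the general rank--transcendence lemma, which is perfectly valid here.
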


The forward implication is the content of the paper; the reverse implication
is functorial.  No finite-rank, countability, unique-factorization, or
cancellation hypothesis is imposed on $G$ or $H$.

The proof uses the subgroup
\[
  k^\times X^G=\{c\mon g:c\in k^\times,\ g\in G\}
  \leq \Kk G^\times
\]
and the quotient
\begin{equation}\label{eq:defect-intro}
  \Dk G=\Kk G^\times/(k^\times X^G),
\end{equation}
which we call the \emph{monomial defect group}.  The decisive structural
statement is that $\Dk G$ is free abelian in characteristic zero.  The
freeness phenomenon belongs to the classical circle of results initiated by
May~\cite{May1972}; May's later paper explicitly recalls a construction of a
field $L=\Q(\Gamma)$ for which $L^\times/\Gamma$ is free
abelian~\cite[proof of Theorem~1]{May1979}.  We nevertheless give a direct
proof for the canonical group-algebra field.  This both fixes the precise
object under discussion and isolates where characteristic zero is used.

After establishing freeness, a field isomorphism
$\varphi:\KQ G\to\KQ H$ supplies two defect maps, one in each direction.
Their kernels are isomorphic subgroups $M\leq G$ and $N\leq H$.  The
quotients $G/M$ and $H/N$ are free, so both extensions split.  The restriction
of $\varphi$ identifies $\KQ M$ with $\KQ N$, and relative transcendence
degree forces the two complementary free groups to have equal rank.  This
short intersection argument is carried out in \cref{sec:reconstruction}.

\section{Relative Transcendence Degree}

We first prove the rank calculation used in the reconstruction argument.
If $H\leq G$, write
\[
  \rk(G/H)=\dim_{\Q}\bigl((G/H)\otimes_{\Z}\Q\bigr),
\]
with arbitrary cardinal values allowed.

\begin{lemma}[Rank--transcendence formula]\label{lem:rank-trdeg}
Let $k$ be a field, let $G$ be torsion-free abelian, and let $H\leq G$.
Then
\[
  \trdeg_{\Kk H}\Kk G=\rk(G/H).
\]
\end{lemma}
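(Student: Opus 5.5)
We prove the formula by exhibiting an explicit transcendence basis of $\Kk G$ over $\Kk H$. Choose a family $(g_i)_{i\in I}$ in $G$ whose images $\bar g_i\in G/H$ form a maximal $\Z$-linearly independent family modulo torsion. Equivalently, the elements $\bar g_i\otimes 1$ form a $\Q$-basis of $(G/H)\otimes_\Z\Q$, so $|I|=\rk(G/H)$. We will show that $\{\mon{g_i}\}_{i\in I}$ is a transcendence basis of $\Kk G$ over $\Kk H$. Throughout, $\Kk H\subseteq\Kk G$ via the injection $k[H]\hookrightarrow k[G]$ and the universal property of fraction fields.

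\emph{Step 1: algebraicity.} Let $G_0=H+\sum_i\Z g_i$. By maximality, $G/G_0$ is torsion. Hence for every $g\in G$ there is $n\ge1$ with $ng\in G_0$, so $\mon g$ is a root of $T^n-\mon{ng}$, whose constant term lies in $k[G_0]\subseteq \Kk H(\mon{g_i}:i\in I)$. Thus $k[G]$, and therefore $\Kk G$, is algebraic over $\Kk H(\mon{g_i})$.

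\emph{Step 2: algebraic independence.} This is the main point. Suppose $P\in\Kk H[T_1,\dots,T_m]$ is nonzero and satisfies $P(\mon{g_{1}},\dots,\mon{g_{m}})=0$ for finitely many distinct indices. Clearing denominators, we may assume the coefficients of $P$ lie in $k[H]$. Evaluating gives a finite sum
\[
\sum_{\alpha}\ \sum_{h} c_{\alpha,h}\,\mon{h+\alpha_1g_1+\dots+\alpha_mg_m}=0
\]
in $k[G]$. The key claim is that the exponents $h+\sum\alpha_jg_j$ are pairwise distinct for distinct pairs $(\alpha,h)$. If $h+\sum\alpha_jg_j=h'+\sum\alpha'_jg_j$, then $\sum(\alpha_j-\alpha'_j)\bar g_j=0$ in $G/H$. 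Independence modulo torsion then forces $\alpha=\alpha'$, and consequently $h=h'$. Since the monomials $\mon g$ form a $k$-basis of $k[G]$, all coefficients $c_{\alpha,h}$ vanish, so $P=0$. Equivalently, $k[H][\mon{\pm g_i}]$ is a Laurent polynomial ring over $k[H]$.

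\emph{Conclusion and caveats.} Steps 1 and 2 together show that $\trdeg_{\Kk H}\Kk G=|I|=\rk(G/H)$, and this holds for infinite cardinals as well. The only care needed is in Step 2: independence must be taken modulo torsion, that is, after tensoring with $\Q$. If we only used independence in $G/H$, torsion in $G/H$ could create coincidences among exponents. Torsion-freeness of $G$ is used only to make $k[G]$ a domain, as already noted in the introduction.
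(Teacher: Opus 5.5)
Your proposal is correct and follows essentially the same route as the paper: the same choice of $(g_i)$ lifting a $\Q$-basis of $(G/H)\otimes_{\Z}\Q$, algebraic independence by clearing denominators into $k[H]$ and noting that distinct multi-indices give distinct exponents, and algebraicity from a relation $ng\in H+\sum_i\Z g_i$. The only difference is cosmetic: you package the algebraicity step through the subgroup $G_0$ with $G/G_0$ torsion, whereas the paper writes out the explicit relation $rdg=h+\sum_i rm_ig_i$ (expressing $g$ in the basis, then using that the kernel of $G/H\to(G/H)\otimes_{\Z}\Q$ is torsion), which is exactly the justification behind your phrase ``by maximality''.
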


\begin{proof}
Put $W=(G/H)\otimes_{\Z}\Q$.  The images in $W$ of the elements of $G$
span $W$ over $\Q$.  Hence one may choose a subset $(g_i)_{i\in I}$ of
$G$ whose images
\[
  \bar g_i=(g_i+H)\otimes 1
\]
form a $\Q$-basis of $W$.  We prove separately the algebraic independence
and the algebraicity assertions.

First suppose that a finite polynomial relation over $\Kk H$ is given:
\begin{equation}\label{eq:polynomial-relation}
  \sum_{\nu\in S} a_\nu\prod_{i\in I}(\mon{g_i})^{\nu_i}=0.
\end{equation}
Here $S\subseteq\N^{(I)}$ is finite, each $\nu=(\nu_i)$ has finite support,
and $a_\nu\in\Kk H$.  Choose a common nonzero denominator $b\in k[H]$
for the finitely many coefficients $a_\nu$.  After multiplying
\cref{eq:polynomial-relation} by $b$, we obtain
\begin{equation}\label{eq:group-ring-relation}
  \sum_{\nu\in S} A_\nu\mon{\sum_i\nu_i g_i}=0
  \qquad(A_\nu\in k[H]).
\end{equation}
Expand $A_\nu=\sum_{h\in T_\nu}c_{\nu,h}\mon h$, where $T_\nu$ is finite.
If two monomials occurring in this expansion have the same exponent, then
for some $\nu,\mu\in S$ and $h\in T_\nu$, $h'\in T_\mu$,
\[
  h+\sum_i\nu_i g_i=h'+\sum_i\mu_i g_i.
\]
Passing to $W$ gives
\[
  \sum_i(\nu_i-\mu_i)\bar g_i=0.
\]
The vectors $\bar g_i$ are $\Q$-linearly independent, so $\nu_i=\mu_i$
for every $i$, and therefore $\nu=\mu$.  Thus terms belonging to distinct
multi-indices in \cref{eq:group-ring-relation} cannot cancel.  Since the
formal monomials $\mon g$, $g\in G$, are a $k$-basis of $k[G]$, every
$A_\nu$ is zero.  Consequently every $a_\nu$ is zero.  This proves that
the family $(\mon{g_i})_{i\in I}$ is algebraically independent over
$\Kk H$.

Now fix $g\in G$.  Because the $\bar g_i$ form a basis of $W$, there is a
finite subset $I_0\subseteq I$ and rational numbers $q_i$ such that
\[
  (g+H)\otimes1=\sum_{i\in I_0}q_i\bar g_i.
\]
Choose a positive integer $d$ with $m_i=dq_i\in\Z$ for all $i\in I_0$.
Then the element
\[
  x=dg-\sum_{i\in I_0}m_i g_i
\]
maps to zero in $(G/H)\otimes\Q$.  The kernel of the natural map
$G/H\to(G/H)\otimes\Q$ is the torsion subgroup of $G/H$: this follows
directly by viewing tensoring with $\Q$ as localization at the nonzero
integers.  Hence there is a
positive integer $r$ such that $rx\in H$.  Writing $h=rx$, we obtain the
actual equality in $G$
\begin{equation}\label{eq:clear-group-denominators}
  rdg=h+\sum_{i\in I_0}rm_i g_i.
\end{equation}
Let
\[
  L=\Kk H(\mon{g_i}:i\in I).
\]
Taking monomials in \cref{eq:clear-group-denominators} yields
\[
  (\mon g)^{rd}=\mon h\prod_{i\in I_0}(\mon{g_i})^{rm_i}\in L.
\]
Negative exponents cause no difficulty because $L$ is a field.  Thus
$\mon g$ is algebraic over $L$, being a root of
\[
  T^{rd}-\mon h\prod_{i\in I_0}(\mon{g_i})^{rm_i}\in L[T].
\]
Every element of $k[G]$ is a finite sum of elements algebraic over $L$, and
the elements algebraic over a field form a field.  It follows that
$k[G]$, and therefore its fraction field $\Kk G$, is algebraic over $L$.
The algebraically independent family $(\mon{g_i})_{i\in I}$ is consequently
a transcendence basis, and its cardinality is
$|I|=\dim_\Q W=\rk(G/H)$.
\end{proof}

\section{The One-Variable Puiseux Quotient}\label{sec:puiseux}

Let $F$ be a field of characteristic zero and let $t$ be transcendental over
$F$.  In a fixed algebraic closure of $F(t)$ choose elements
$t^{1/n!}$ compatibly, meaning that
\[
  \bigl(t^{1/(n+1)!}\bigr)^{n+1}=t^{1/n!}
  \quad(n\geq1).
\]
For $q=a/n!\in\Q$, set $t^q=(t^{1/n!})^a$; compatibility makes this
independent of the chosen factorial denominator: if $m\geq n$, then
$t^{1/n!}=(t^{1/m!})^{m!/n!}$.  We use the distinct
notations
\[
  t^\Q=\{t^q:q\in\Q\}\subseteq F(t^\Q)^\times,
  \qquad
  F(t^{\Q})=\bigcup_{n\geq1}F(t^{1/n!}).
\]
Thus $t^\Q$ is a multiplicative subgroup, whereas $F(t^\Q)$ is a field.
The following lemma is the local factorization engine of the paper.

\begin{lemma}[Puiseux quotient]\label{lem:puiseux-free}
If $\operatorname{char}F=0$, then
\[
  P(F,t):=F(t^{\Q})^\times/(F^\times t^{\Q})
\]
is a free abelian group.
\end{lemma}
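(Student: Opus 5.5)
We will exhibit $P(F,t)$ as an increasing union of free abelian groups in which each member is a direct summand of the next, with free complements. Such a union is free: a basis of $P_1$ together with bases of the successive complements is a basis of the union.

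For $n\ge1$ put $s_n=t^{1/n!}$ and $F_n=F(s_n)$, a rational function field in one variable. Let
\[
  P_n=F_n^\times/(F^\times s_n^{\Z}).
\]
By unique factorization in $F[s_n]$, $P_n$ is free abelian with basis the monic irreducible polynomials $p(s_n)\in F[s_n]$ other than $s_n$ itself. The natural maps $P_n\to P(F,t)$ are injective. Indeed, if $f\in F_n^\times$ lies in $F^\times t^{\Q}$, then $f=c\,t^q$ with $t^q\in F_n$. Since $t^q\in F(s_n)$ is a monomial whose $m$-th power is a power of $s_n$, the $s_n$-adic valuation together with degree considerations forces $q\in\frac1{n!}\Z$. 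Hence $f\in F^\times s_n^{\Z}$. The union of the images of the $P_n$ is all of $P(F,t)$, because $F(t^\Q)=\bigcup F_n$.

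The key step is to show that $P_n\to P_{n+1}$, induced by the substitution $s_n=s_{n+1}^{n+1}$, is a split injection with free cokernel. Take a monic irreducible $p(x)\in F[x]$ with $p\neq x$. Then $p(y^{n+1})$ factors in $F[y]$ as a product of monic irreducibles $q_j(y)$, none equal to $y$ because $p(0)\ne0$. These factors are pairwise distinct. The reason is that $p$ is separable, since $\operatorname{char}F=0$, and $p(0)\neq0$. So the derivative $(n+1)y^np'(y^{n+1})$ shares no root with $p(y^{n+1})$: a common root $\beta$ has $\beta\ne0$ and $p(\beta^{n+1})=p'(\beta^{n+1})=0$, which is impossible. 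Thus $p(y^{n+1})$ is square-free. Moreover, distinct $p\ne p'$ give coprime substituted polynomials, so their irreducible factor sets are disjoint.

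In the bases described above, the map $P_n\to P_{n+1}$ therefore sends each basis element $p$ to the sum $\sum_j q_j$ of distinct basis elements, each with coefficient $1$, and the supports for different $p$ are disjoint. Choose one factor $q_{p}$ for each $p$ and define $\pi:P_{n+1}\to P_n$ by sending $q_p\mapsto p$ and every other basis element to $0$. Then $\pi$ is a retraction. The complement, spanned by the remaining basis elements of $P_{n+1}$, is free, so the inclusion splits with free cokernel.

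Finally we assemble the pieces. Take a basis $B_1$ of $P_1$ and bases $C_n$ of the free complements of $P_n$ in $P_{n+1}$. Then $B_1\cup\bigcup_n C_n$ is a basis of $\bigcup P_n=P(F,t)$. Linear independence holds because any finite relation lives in some $P_N=P_1\oplus C_1\oplus\dots\oplus C_{N-1}$, and spanning holds because every element lies in some $P_N$. The main obstacle is the square-freeness and disjointness of the factorizations, which is exactly where characteristic zero is used: in characteristic $p$ with $p\mid n+1$, the polynomial $p(y^{n+1})$ could be a $p$-th power and the coefficient $1$ would fail. A second point needing care is the injectivity of $P_n\to P(F,t)$, which should follow from a valuation argument.
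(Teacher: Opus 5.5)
Your proposal is correct and follows the paper's proof in all essentials. The shared steps are: free stage groups $P_n$ from unique factorization; square-free, pairwise coprime substitutions $p(Y^{n+1})$ that make $P_n\to P_{n+1}$ a split injection with free complement; injectivity of $P_n\to P(F,t)$; and assembly of the union as $P_1\oplus\bigoplus_n C_n$. The differences are minor: you get injectivity from the $s_n$-adic valuation of $t^q\in F(s_n)$ (the valuation alone suffices, with no degree argument needed) rather than the paper's support-translation argument in $F[\Q]$, and your retraction $\pi$ spares you the paper's extra step showing that every irreducible of $F[s_{n+1}]$ other than $s_{n+1}$ lies over some $p$.
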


\begin{proof}
Put $z_n=t^{1/n!}$ and
\[
  P_n=F(z_n)^\times/(F^\times z_n^{\Z}).
\]
Let $\mathcal I_n$ be the set of monic irreducible polynomials in the
polynomial ring $F[z_n]$, with the polynomial $z_n$ omitted.  For each
$r\in\mathcal I_n$, let $v_r$ be the usual exponent valuation in the unique
factorization of a rational function.  Every $f\in F(z_n)^\times$ has a
unique expression
\begin{equation}\label{eq:rational-factorization}
  f=c z_n^m\prod_{r\in\mathcal I_n}r(z_n)^{a_r},
\end{equation}
where $c\in F^\times$, $m,a_r\in\Z$, and only finitely many $a_r$ are
nonzero.  Therefore the map
\[
  [f]\longmapsto (v_r(f))_{r\in\mathcal I_n}
\]
is a well-defined isomorphism
\begin{equation}\label{eq:Pn-free}
  P_n\xrightarrow{\ \sim\ }\Z^{(\mathcal I_n)}.
\end{equation}

We next determine the transition map under these bases.  Put $d=n+1$ and
write $Y=z_{n+1}$, so that
\begin{equation}\label{eq:stage-substitution}
  z_n=Y^d.
\end{equation}
Fix a monic irreducible $p(Z)\in F[Z]$ with $p\neq Z$.  Then $p(0)\neq0$.
Because $F$ has characteristic zero, $p$ is separable and
$\gcd(p,p')=1$.  Choose $A,B\in F[Z]$ satisfying
\[
  A(Z)p(Z)+B(Z)p'(Z)=1.
\]
After substituting $Z=Y^d$, this becomes
\begin{equation}\label{eq:bezout-substitution}
  A(Y^d)p(Y^d)+B(Y^d)p'(Y^d)=1.
\end{equation}
The formal derivative of $p(Y^d)$ is
\[
  \frac{d}{dY}p(Y^d)=dY^{d-1}p'(Y^d).
\]
Equation \cref{eq:bezout-substitution} shows that $p(Y^d)$ is coprime to
$p'(Y^d)$; its nonzero constant term shows that it is coprime to $Y$; and
$d\neq0$ in $F$.  Hence $p(Y^d)$ is coprime to its derivative and is
square-free.

If $p_1$ and $p_2$ are distinct monic irreducibles, then they are coprime.
Substitution into a Bezout identity for $p_1$ and $p_2$ shows that
$p_1(Y^d)$ and $p_2(Y^d)$ are coprime.  Thus no target irreducible can
occur above two distinct source irreducibles.

Conversely, let $q(Y)\neq Y$ be a monic irreducible polynomial over $F$,
and write $E=F[Y]/(q)$ and $\bar y=Y+(q)\in E$.  Since $q\neq Y$, we have
$\bar y\neq0$.  Let $p(Z)$ be the monic minimal polynomial of $\bar y^d$
over $F$.  Then $p$ is irreducible, $p\neq Z$, and
$p(\bar y^d)=0$.  The last equality is precisely the assertion that
$q(Y)$ divides $p(Y^d)$.  If $q$ also divides $\widetilde p(Y^d)$ for a
monic irreducible $\widetilde p$, then
$\widetilde p(\bar y^d)=0$, so minimality implies
$p=\widetilde p$.  Therefore each target irreducible lies above exactly one
source irreducible.

For $p\in\mathcal I_n$, let
\[
  S_p=\{q\in\mathcal I_{n+1}:q(Y)\text{ divides }p(Y^d)\}.
\]
Each $S_p$ is a nonempty finite set, the sets $S_p$ are pairwise disjoint,
and the preceding paragraph proves that they partition
$\mathcal I_{n+1}$.  Square-freeness gives
\[
  p(Y^d)=\prod_{q\in S_p}q(Y).
\]
Consequently, under \cref{eq:Pn-free}, the transition homomorphism sends
the basis vector $p$ to
\begin{equation}\label{eq:transition-vector}
  \sum_{q\in S_p}q.
\end{equation}
For each $p$, choose one element $q_p\in S_p$.  The family
\[
  \left\{\sum_{q\in S_p}q\right\}
  \cup\{q:q\in S_p\setminus\{q_p\}\}
\]
is a basis of $\Z^{(S_p)}$: relative to the original basis, replacing
$q_p$ by the sum has determinant $1$.  Thus
\[
  \Z^{(S_p)}=
  \Z\left(\sum_{q\in S_p}q\right)
  \oplus\Z^{(S_p\setminus\{q_p\})}.
\]
Taking the direct sum over all $p$ proves that
$P_n\to P_{n+1}$ is injective, that its image is a direct summand, and that
its cokernel is free abelian.

It remains to justify that these stage groups really form an increasing
union inside the desired quotient.  Let
\[
  \iota_n:P_n\longrightarrow
  P(F,t)=F(t^\Q)^\times/(F^\times t^\Q)
\]
be induced by the field inclusion.  Suppose $[f]\in\ker\iota_n$.  Then
$f=c t^q$ for some $c\in F^\times$ and $q\in\Q$.  Write
$f=a/b$ with nonzero $a,b\in F[z_n]$.  In the group algebra $F[\Q]$ we
then have
\begin{equation}\label{eq:support-translation-puiseux}
  a=c t^q b.
\end{equation}
For an element of the group algebra written uniquely as a finite sum
\[
  u=\sum_{r\in\Q}u_r t^r\in F[\Q],
\]
define its support by
\[
  \operatorname{supp}(u)=\{r\in\Q:u_r\neq0\},
  \qquad \operatorname{supp}(0)=\varnothing.
\]
This definition is used only for group-algebra elements, not for arbitrary
fractions.  With $F[z_n]$ viewed as a subring of $F[\Q]$, both $a$ and $b$
have finite support contained in
$L_n=(1/n!)\Z$.  Multiplication by $t^q$ translates support and causes no
cancellation, so
\[
  \operatorname{supp}(a)=q+\operatorname{supp}(b).
\]
Choose $s\in\operatorname{supp}(b)$.  Then $s\in L_n$ and
$q+s\in L_n$, whence $q\in L_n$.  Therefore $t^q\in z_n^\Z$, so $[f]$ was
already the identity element of $P_n$.  This proves that every $\iota_n$
is injective.

Every rational exponent belongs to some $L_n$, and every element of the
union field belongs to some $F(z_n)$.  Hence the images of the $P_n$ cover
$P(F,t)$.  We may therefore identify
\[
  P(F,t)=\bigcup_{n\geq1}P_n.
\]
For each $n$, choose the explicit free complement $C_n$ constructed above,
so that $P_{n+1}=P_n\oplus C_n$.  Induction gives
\[
  P_m=P_1\oplus\bigoplus_{1\leq n<m}C_n,
\]
and taking the union over $m$ gives
\[
  P(F,t)=P_1\oplus\bigoplus_{n\geq1}C_n.
\]
Both $P_1$ and all the $C_n$ are free abelian; their direct sum is free
abelian.  This proves the lemma.
\end{proof}

\section{Freeness of the Monomial Defect}\label{sec:defect}

We now pass from one divisible direction to an arbitrary torsion-free
abelian group.

\begin{definition}
For a field $k$ and a torsion-free abelian group $G$, define
\[
  \Dk G=\Kk G^\times/(k^\times X^G).
\]
We use multiplicative notation for this quotient; its identity element is
the class $[1]$.  When discussing freeness and internal direct-sum
decompositions, we may regard it as an abstract abelian group.
\end{definition}

We shall use the following elementary module-theoretic fact.  Its
arbitrary-rank form is included because finite-rank Smith normal form is not
sufficient here.

\begin{lemma}[Subgroups of free abelian groups]\label{lem:subgroup-free}
Every subgroup of a free abelian group is free abelian.
\end{lemma}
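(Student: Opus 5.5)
We prove the statement in its arbitrary-rank form (Dedekind's theorem) by transfinite induction along a well-ordered basis. Let $F$ be free abelian with basis $(e_\alpha)_{\alpha<\kappa}$, indexed by an ordinal $\kappa$ via a well-ordering of the basis (this uses the axiom of choice, as is standard). Let $A\leq F$ be a subgroup. For each ordinal $\beta\leq\kappa$ put
\[
  F_\beta=\bigoplus_{\alpha<\beta}\Z e_\alpha,
  \qquad
  A_\beta=A\cap F_\beta .
\]
Then $F_{\beta+1}=F_\beta\oplus\Z e_\beta$. Also $F_\lambda=\bigcup_{\beta<\lambda}F_\beta$ at limit ordinals $\lambda$, because every element of $F$ has finite support. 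Consequently $A=A_\kappa$, and the chain $(A_\beta)$ is increasing and continuous at limits.

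The key step treats a single successor. Let $\pi_\beta:F_{\beta+1}\to\Z$ be the coordinate projection onto the $e_\beta$-component. Its restriction to $A_{\beta+1}$ has kernel exactly $A_\beta$. Its image $\pi_\beta(A_{\beta+1})$ is a subgroup of $\Z$, so it equals $m_\beta\Z$ for some $m_\beta\geq0$.
\begin{itemize}
\item If $m_\beta=0$, then $A_{\beta+1}=A_\beta$, and we set $C_\beta=0$.
\item If $m_\beta>0$, choose $a_\beta\in A_{\beta+1}$ with $\pi_\beta(a_\beta)=m_\beta$. Then every $x\in A_{\beta+1}$ decomposes uniquely as $x=(x-n a_\beta)+n a_\beta$, where $n=\pi_\beta(x)/m_\beta$. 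This gives
\[
  A_{\beta+1}=A_\beta\oplus\Z a_\beta ,
\]
and we set $C_\beta=\Z a_\beta$, which is infinite cyclic.
\end{itemize}

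We then assemble the pieces. Let $B$ be the set of chosen elements $a_\beta$, over those $\beta$ with $m_\beta>0$. We claim by transfinite induction on $\gamma\leq\kappa$ that $\{a_\beta\in B:\beta<\gamma\}$ is a basis of $A_\gamma$.
\begin{itemize}
\item The case $\gamma=0$ is trivial.
\item The successor case is exactly the decomposition $A_{\beta+1}=A_\beta\oplus C_\beta$ from the previous step.
\item At a limit $\lambda$, spanning follows because every element of $A_\lambda$ already lies in some $A_\beta$ with $\beta<\lambda$. Linear independence follows because any finite relation among basis elements involves only finitely many indices, all below some $\beta<\lambda$, where independence is already known.
\end{itemize}
Taking $\gamma=\kappa$ shows that $A$ is free abelian, with basis $B$.

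The main point requiring care is the limit stage: the argument needs the chain to be continuous, $A_\lambda=\bigcup_{\beta<\lambda}A_\beta$. This holds precisely because $F_\lambda$ is the union of the $F_\beta$, i.e.\ because elements of $F$ have finite support. The successor step itself relies only on the freeness (projectivity) of the quotient $\pi_\beta(A_{\beta+1})\subseteq\Z$.
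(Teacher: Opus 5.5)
Your proof is correct and is essentially the paper's argument: the same well-ordered filtration by $A\cap F_\beta$, continuity at limits from finite support, and splitting at successors via the coordinate projection onto $\Z$. The only differences are that you write down the complement $\Z a_\beta$ explicitly instead of appealing to projectivity, and you verify the limit stage of the basis induction in more detail than the paper does.
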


\begin{proof}
Let $A=\bigoplus_{\alpha<\kappa}\Z e_\alpha$ and let $B\leq A$.  For each
$\alpha\leq\kappa$, define
\[
  A_\alpha=\bigoplus_{\beta<\alpha}\Z e_\beta,
  \qquad B_\alpha=B\cap A_\alpha.
\]
In particular, $A_0=0$ and $B_0=B\cap A_0=0$.
The chain $(B_\alpha)$ is increasing and continuous: if $\lambda$ is a
limit ordinal, every element of $A_\lambda$ has finite support, so it
belongs to some $A_\alpha$ with $\alpha<\lambda$, and hence
$B_\lambda=\bigcup_{\alpha<\lambda}B_\alpha$.

At a successor stage, projection onto the coefficient of $e_\alpha$
induces an injective homomorphism
\[
  B_{\alpha+1}/B_\alpha\longrightarrow\Z.
\]
Its image is a subgroup of $\Z$, therefore either $0$ or $d_\alpha\Z$ for
some $d_\alpha>0$.  Thus $B_{\alpha+1}/B_\alpha$ is either zero or infinite
cyclic.  In either case it is free and hence projective, so the exact
sequence
\[
  0\longrightarrow B_\alpha\longrightarrow B_{\alpha+1}
  \longrightarrow B_{\alpha+1}/B_\alpha\longrightarrow0
\]
splits.  Choose a complement $C_\alpha$, equal either to $0$ or to a cyclic
group, such that $B_{\alpha+1}=B_\alpha\oplus C_\alpha$.  Transfinite
induction, using continuity at limit ordinals, gives
\[
  B_\alpha=\bigoplus_{\beta<\alpha}C_\beta
  \quad\text{for every }\alpha\leq\kappa.
\]
In particular $B=\bigoplus_{\beta<\kappa}C_\beta$ is free abelian.
\end{proof}

\begin{lemma}[Passage to an overgroup]\label{lem:defect-injective}
If $G\leq V$ are torsion-free abelian groups, then the natural map
\[
  \Dk G\longrightarrow\Dk V
\]
is injective.
\end{lemma}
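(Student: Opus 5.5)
The plan is to show that if $f\in\Kk G^\times$ becomes a monomial in $\Kk V$, say $f=c\mon v$ with $c\in k^\times$ and $v\in V$, then already $v\in G$. In that case $f\in k^\times X^G$, so the class $[f]$ is trivial in $\Dk G$. The map $\Dk G\to\Dk V$ is well defined because $\Kk G\subseteq\Kk V$, through the inclusion $k[G]\subseteq k[V]$ of domains, and $k^\times X^G\subseteq k^\times X^V$. Injectivity therefore reduces to this single membership claim.

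To prove it, write $f=a/b$ with nonzero $a,b\in k[G]$. Clearing the denominator inside the domain $k[V]$ gives
\[
  a=c\,\mon v\, b\qquad\text{in }k[V].
\]
I would then compare supports, exactly as in the injectivity step of the proof of \cref{lem:puiseux-free}. For $u=\sum_{w\in V}u_w\mon w\in k[V]$, put $\operatorname{supp}(u)=\{w:u_w\neq0\}$. Multiplying by the unit $c\mon v$ only translates exponents by $v$ and rescales coefficients by $c\neq0$, so no cancellation occurs. Hence
\[
  \operatorname{supp}(a)=v+\operatorname{supp}(b).
\]
Since $b\neq0$, we may pick $s\in\operatorname{supp}(b)\subseteq G$. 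Then $v+s\in\operatorname{supp}(a)\subseteq G$, which gives $v=(v+s)-s\in G$. Here we use that the monomials $\mon w$ form a $k$-basis of $k[V]$, and that $k[G]$ is exactly the span of the monomials with exponents in $G$.

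There is no real obstacle. The only point needing care is making sure the identity $a=c\mon v b$ is read in $k[V]$, where supports make sense, rather than in the field. This works because $\Kk V$ contains $k[V]$ as a subring and both sides lie in $k[V]$.
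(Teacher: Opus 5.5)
Your proof is correct and follows the paper's argument essentially line by line. You clear denominators to get $a=c\mon v b$ in $k[V]$, note that multiplication by a monomial translates supports without cancellation, and pick $s\in\operatorname{supp}(b)\subseteq G$ to conclude that $v\in G$.
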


\begin{proof}
The inclusion $k[G]\hookrightarrow k[V]$ is an inclusion of integral
domains and therefore extends to an injective field homomorphism
$\Kk G\hookrightarrow\Kk V$.  It sends $k^\times X^G$ into
$k^\times X^V$, so the displayed homomorphism of quotient groups is
well-defined.

Suppose that $f\in\Kk G^\times$ maps to $c\mon v$ for some
$c\in k^\times$ and $v\in V$.  Write $f=a/b$ with nonzero
$a,b\in k[G]$.  In $k[V]$ we have
\[
  a=c\mon v b.
\]
Multiplication by a monomial translates support without cancellation, so
\[
  \operatorname{supp}(a)=v+\operatorname{supp}(b).
\]
Choose $g\in\operatorname{supp}(b)$, which is nonempty because $b\neq0$.
We have $g\in G$, and the support equality gives $v+g\in
\operatorname{supp}(a)\subseteq G$.  Therefore
$v=(v+g)-g\in G$.  The equality $f=c\mon v$ now holds already in
$\Kk G$, so $f\in k^\times X^G$.  The kernel is trivial.
\end{proof}

\begin{proposition}[Divisible case]\label{prop:defect-vector-space}
Let $k$ have characteristic zero and let $V$ be a $\Q$-vector space,
viewed as an additive group.  Then $\Dk V$ is free abelian.
\end{proposition}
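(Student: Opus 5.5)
Choose a $\Q$-basis $(e_\alpha)_{\alpha<\kappa}$ of $V$, well-ordered by a cardinal $\kappa$. For $\alpha\le\kappa$ let $V_\alpha$ be the $\Q$-span of $\{e_\beta:\beta<\alpha\}$, so that $V_{\alpha+1}=V_\alpha\oplus\Q e_\alpha$. By \cref{lem:defect-injective} the groups $D_\alpha:=\Dk{V_\alpha}$ embed compatibly in $\Dk V$. The plan is to show two things: the chain $(D_\alpha)$ is continuous, and each inclusion $D_\alpha\to D_{\alpha+1}$ has a free complement. Transfinite induction, exactly as in the proof of \cref{lem:subgroup-free}, then writes $\Dk V=\bigoplus_{\alpha<\kappa}C_\alpha$ with every $C_\alpha$ free.

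Continuity is straightforward. An element of $\Kk{V_\lambda}$ is a quotient of two group-algebra elements with finite support. Those supports lie in some $V_\alpha$ with $\alpha<\lambda$, so every class in $D_\lambda$ already lies in $D_\alpha$. Also $D_0=\Dk 0=k^\times/k^\times$ is trivial.

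For the successor step, put $F=\Kk{V_\alpha}$, which has characteristic zero, and $t=\mon{e_\alpha}$. The key identification is
\[
  \Kk{V_{\alpha+1}}\cong F(t^{\Q}).
\]
To see this, note that $k[V_{\alpha+1}]=k[V_\alpha]\otimes k[\Q e_\alpha]$ sits inside $F[t^{\Q}]$. The element $t$ is transcendental over $F$ by \cref{lem:rank-trdeg}, and every generator of $F(t^\Q)$ is a quotient of elements of $k[V_{\alpha+1}]$ by elements of $k[V_\alpha]$. The roots $t^{1/n!}$ are taken to be the monomials $\mon{e_\alpha/n!}$, which are compatible. \Cref{lem:puiseux-free} then says that $P(F,t)=F(t^\Q)^\times/(F^\times t^\Q)$ is free. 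We have an exact sequence
\[
  1\to F^\times t^\Q/(k^\times X^{V_{\alpha+1}})\to D_{\alpha+1}\to P(F,t)\to 1 .
\]
The left term is $F^\times t^\Q/(k^\times X^{V_\alpha}t^\Q)\cong F^\times/(F^\times\cap k^\times X^{V_\alpha}t^\Q)$, using $X^{V_{\alpha+1}}=X^{V_\alpha}t^\Q$. The intersection $F^\times\cap k^\times X^{V_\alpha}t^\Q$ equals $k^\times X^{V_\alpha}$: if $c\mon v t^q\in F$, then $t^q\in F$, which forces $q=0$ by transcendence, or equivalently by the support argument of \cref{lem:defect-injective}. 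So the left term is $\Dk{V_\alpha}=D_\alpha$. Since $P(F,t)$ is free, hence projective, the sequence splits and $D_{\alpha+1}=D_\alpha\oplus C_\alpha$ with $C_\alpha\cong P(F,t)$ free.

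The main obstacle I expect is the successor identification. It requires checking carefully that the composite $D_\alpha\to D_{\alpha+1}$ in the exact sequence is the natural map. It also requires that the quotient $F(t^\Q)^\times/(F^\times t^\Q)$ really is the cokernel, that is, that $F^\times t^\Q$ contains $k^\times X^{V_{\alpha+1}}$. The bookkeeping is mild, but all the torsion-freeness input sits here. Characteristic zero enters only through \cref{lem:puiseux-free}.
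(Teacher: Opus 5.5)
Your proposal is correct and follows the paper's proof essentially step for step. Both use the well-ordered basis filtration $V_\alpha$ with continuity at limit ordinals, identify $\Kk{V_{\alpha+1}}=F(t^{\Q})$ so that the successor quotient $D_{\alpha+1}/D_\alpha\cong P(F,t)$ splits off by \cref{lem:puiseux-free}, and assemble the free complements by transfinite induction. The differences are cosmetic. You get transcendence of $t$ from \cref{lem:rank-trdeg}; note that this needs the small extra observation that $t$ algebraic would make $F(t^{\Q})$ algebraic over $F$, whereas the paper uses a direct support argument. You also identify the kernel with $D_\alpha$ via the second isomorphism theorem, where the paper uses an inverse-image and third isomorphism theorem computation.
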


\begin{proof}
Choose a well-ordered basis $(e_\alpha)_{\alpha<\kappa}$ of $V$ and put
\[
  V_\alpha=\bigoplus_{\beta<\alpha}\Q e_\beta.
\]
Write $D_\alpha=\Dk{V_\alpha}$.  By
\cref{lem:defect-injective}, all maps $D_\alpha\to D_\beta$ for
$\alpha\leq\beta$ are injective, and we henceforth identify $D_\alpha$
with its image in $D_\beta$.

At a successor stage, put $F_\alpha=\Kk{V_\alpha}$ and
$t_\alpha=\mon{e_\alpha}$.  Since
$V_{\alpha+1}=V_\alpha\oplus\Q e_\alpha$, the group algebra is generated
by $k[V_\alpha]$ and the monomials $t_\alpha^q$, $q\in\Q$, and therefore
\[
  \Kk{V_{\alpha+1}}
  =F_\alpha(t_\alpha^{\Q}).
\]
The element $t_\alpha$ is transcendental over $F_\alpha$.  Indeed, a
polynomial relation with coefficients in $F_\alpha$ could first be
multiplied by a common nonzero denominator from $k[V_\alpha]$.  Expanding
the resulting relation in $k[V_{\alpha+1}]$ would give a nontrivial
$k$-linear relation among monomials whose $e_\alpha$-coordinates are
distinct, which is impossible.

We now compute the successor quotient without suppressing any subgroup.
Under the quotient map
\[
  \Kk{V_{\alpha+1}}^\times\longrightarrow D_{\alpha+1},
\]
the inverse image of $D_\alpha$ is
\[
  F_\alpha^\times(k^\times X^{V_{\alpha+1}})
  =F_\alpha^\times t_\alpha^\Q,
\]
because $X^{V_{\alpha+1}}=X^{V_\alpha}t_\alpha^\Q$ and
$k^\times X^{V_\alpha}\subseteq F_\alpha^\times$.  The third isomorphism
theorem therefore gives
\begin{align*}
  D_{\alpha+1}/D_\alpha
  &\cong
  \Kk{V_{\alpha+1}}^\times/
  \bigl(F_\alpha^\times t_\alpha^{\Q}\bigr)\\
  &=F_\alpha(t_\alpha^\Q)^\times/
  \bigl(F_\alpha^\times t_\alpha^\Q\bigr)\\
  &=P(F_\alpha,t_\alpha).
\end{align*}
This quotient is free by \cref{lem:puiseux-free}, so the short exact
sequence
\[
  0\longrightarrow D_\alpha\longrightarrow D_{\alpha+1}
  \longrightarrow P(F_\alpha,t_\alpha)\longrightarrow0
\]
splits: a free abelian quotient is projective.  Choose a free subgroup
$C_\alpha\leq D_{\alpha+1}$ such that
\begin{equation}\label{eq:defect-successor-splitting}
  D_{\alpha+1}=D_\alpha\oplus C_\alpha.
\end{equation}

At a limit ordinal $\lambda$, let $[f]\in D_\lambda$.  Represent $f$ as
$a/b$ with nonzero $a,b\in k[V_\lambda]$.  The two supports are finite, so
only finitely many basis vectors $e_\beta$ occur in all their exponents.
Because $\lambda$ is a limit ordinal, there is some $\alpha<\lambda$
larger than every such index.  Then $a,b\in k[V_\alpha]$ and
$f\in\Kk{V_\alpha}$, which shows that $[f]\in D_\alpha$.  The reverse
inclusion is automatic.  Hence the chain is continuous:
\[
  D_\lambda=\bigcup_{\alpha<\lambda}D_\alpha.
\]
Starting with $D_0=0$, transfinite induction using
\cref{eq:defect-successor-splitting} at successors and continuity at limits
gives
\[
  D_\alpha=\bigoplus_{\beta<\alpha}C_\beta
  \quad(\alpha\leq\kappa).
\]
Since $V_\kappa=V$, we obtain
$\Dk V=\bigoplus_{\beta<\kappa}C_\beta$, a direct sum of free abelian
groups and therefore a free abelian group.
\end{proof}

\begin{theorem}[Defect freeness]\label{thm:defect-free}
Let $k$ be a field of characteristic zero and let $G$ be any torsion-free
abelian group.  Then $\Dk G$ is free abelian.
\end{theorem}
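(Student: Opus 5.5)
The plan is to reduce to the divisible case by embedding $G$ in its divisible hull and then use the subgroup lemma. Put $V=\Q\otimes_{\Z}G$. Since $G$ is torsion-free, the natural map $G\to V$, $g\mapsto g\otimes1$, is injective, so we may regard $G$ as a subgroup of the $\Q$-vector space $V$. By \cref{prop:defect-vector-space}, applied to $V$ over the characteristic-zero field $k$, the group $\Dk V$ is free abelian.

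Next we apply \cref{lem:defect-injective} to the inclusion $G\leq V$. It shows that the natural homomorphism $\Dk G\to\Dk V$, induced by the field embedding $\Kk G\hookrightarrow\Kk V$, is injective. Hence $\Dk G$ is isomorphic to a subgroup of the free abelian group $\Dk V$. By \cref{lem:subgroup-free}, that subgroup is free abelian, and therefore so is $\Dk G$.

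There is essentially no obstacle at this stage: all the substantive work sits in \cref{lem:puiseux-free} and the transfinite argument of \cref{prop:defect-vector-space}. The only points to check are routine. First, $G\to\Q\otimes G$ is injective for torsion-free $G$, since its kernel is the torsion subgroup, exactly as observed in the proof of \cref{lem:rank-trdeg}. Second, the identification of $G$ with its image in $V$ is compatible with the group-algebra constructions, so that \cref{lem:defect-injective} applies verbatim. Characteristic zero enters only through the appeal to \cref{prop:defect-vector-space}.
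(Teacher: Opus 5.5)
Your proposal is correct and is the paper's own proof. You embed $G$ in $V=\Q\otimes_{\Z}G$, use \cref{lem:defect-injective} to realize $\Dk G$ as a subgroup of the free group $\Dk V$ from \cref{prop:defect-vector-space}, and conclude with \cref{lem:subgroup-free}.
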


\begin{proof}
Embed $G$ in its divisible hull $V=\Q\otimes_{\Z}G$.  The canonical map
$G\to V$, $g\mapsto g\otimes1$, is injective because its
kernel is the torsion subgroup of $G$, which is zero.  By
\cref{lem:defect-injective}, $\Dk G$ is a subgroup of $\Dk V$, which is free
by \cref{prop:defect-vector-space}.  Apply \cref{lem:subgroup-free} to this
subgroup.
\end{proof}

\begin{remark}[Why characteristic zero matters]\label{rem:char-p}
The mechanism in \cref{lem:puiseux-free} fails in characteristic $p>0$.
Indeed,
\[
  t-1=(t^{1/p^n}-1)^{p^n}.
\]
Put
\[
  P=F(t^{\Q})^\times/(F^\times t^{\Q}),
  \qquad u_n=t^{1/p^n},
  \qquad y_n=[u_n-1]\in P.
\]
We retain multiplicative notation in $P$.  Since
$u_n-1=(u_{n+1}-1)^p$, we have $y_n=y_{n+1}^p$.  Define
\[
  \eta:\Z[1/p]\longrightarrow
  P,
  \qquad \eta(a/p^n)=y_n^a
  \quad(a\in\Z,\ n\geq0).
\]
Here $\Z[1/p]$ is written additively and $P$ multiplicatively.  To check
well-definedness, suppose $a/p^n=b/p^m$ and choose
$\ell\geq m,n$.  Then $ap^{\ell-n}=bp^{\ell-m}$ and
\[
  y_n^a=y_\ell^{ap^{\ell-n}}
  =y_\ell^{bp^{\ell-m}}=y_m^b.
\]
The same common-denominator calculation gives
$\eta(r+s)=\eta(r)\eta(s)$, so $\eta$ is a group homomorphism from the
additively written source to the multiplicatively written target.
The map is injective.  Indeed, suppose $y_n^a=1$ in $P$.  By taking
inverses if necessary, it is enough to consider $a>0$.  The definition of
the quotient then gives $c\in F^\times$ and $q\in\Q$ such that
\[
  (u_n-1)^a=ct^q.
\]
Choose a positive integer $N$ divisible by $p^n$ and by a denominator of
$q$, and put $w=t^{1/N}$.  The equality lies in $F(w)$ and becomes
\[
  \bigl(w^{N/p^n}-1\bigr)^a=cw^{Nq}.
\]
At $w=1$ the left-hand side is zero, whereas the right-hand side is
$c\neq0$, a contradiction.  Thus $a=0$.  The image of $\eta$ is exactly
the subgroup generated by the $y_n$, so this subgroup is isomorphic to
$\Z[1/p]$.  Since a subgroup of a free abelian group must be free by
\cref{lem:subgroup-free}, while $\Z[1/p]$ is not free abelian (its element
$1$ is divisible by $p^n$ for every $n$, whereas no nonzero element of a
free abelian group has this property), the Puiseux
quotient is not free in characteristic
$p$.  This observation marks a genuine boundary of the present proof; it
does not decide the positive-characteristic reconstruction problem.
\end{remark}

\section{Reconstruction from Two Monomial Frames}\label{sec:reconstruction}

We now prove \cref{thm:intro-main}.  The argument is stated separately to
make clear that defect freeness is the only structural input about
multiplicative groups.

\begin{theorem}[Two-frame reconstruction]\label{thm:reconstruction}
Let $G$ and $H$ be torsion-free abelian groups.  Every field isomorphism
\[
  \varphi:\KQ G\longrightarrow\KQ H
\]
implies an isomorphism $G\cong H$.
\end{theorem}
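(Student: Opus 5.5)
The idea is to compare the two monomial subgroups inside the single multiplicative group $\KQ H^\times$. Put
\[
  M=\{g\in G:\ \varphi(\mon g)\in\Q^\times X^H\},\qquad
  N=\{h\in H:\ \varphi^{-1}(\mon h)\in\Q^\times X^G\}.
\]
Both are subgroups, since $\Q^\times X^H$ is a subgroup of $\KQ H^\times$ and $g\mapsto\varphi(\mon g)$ is a homomorphism.

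\emph{Freeness of the quotients.} The composite $G\to\KQ H^\times\to\DQ H$, $g\mapsto[\varphi(\mon g)]$, is a homomorphism with kernel $M$. Hence $G/M$ embeds in $\DQ H$. That group is free abelian by \cref{thm:defect-free}, so $G/M$ is free by \cref{lem:subgroup-free}. Symmetrically, $H/N$ is free. Free quotients are projective, so $G\cong M\oplus(G/M)$ and $H\cong N\oplus(H/N)$.

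\emph{Identifying $M$ with $N$.} Any automorphism of $\Q$ is the identity, so $\varphi$ fixes $\Q^\times$. For $g\in M$ write $\varphi(\mon g)=c_g\mon{\psi(g)}$ with $c_g\in\Q^\times$ and $\psi(g)\in H$.
\begin{itemize}
  \item The pair $(c_g,\psi(g))$ is unique, because $\Q^\times X^H\cong\Q^\times\times H$: monomials are a basis of $\Q[H]$.
  \item Consequently $\psi:M\to H$ is a homomorphism.
  \item $\psi$ is injective: if $\psi(g)=0$, then $\varphi(\mon g)\in\Q^\times$, so $\mon g\in\Q^\times$, which forces $g=0$.
  \item $\psi(M)\subseteq N$, since $\varphi^{-1}(\mon{\psi(g)})=c_g^{-1}\mon g\in\Q^\times X^G$.
\end{itemize}
The same construction for $\varphi^{-1}$ gives $\psi':N\to M$ inverse to $\psi$. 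Hence $M\cong N$.

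\emph{Matching the ranks of the free parts.} This is the step I expect to be the main obstacle. It remains to show $\rk(G/M)=\rk(H/N)$, since free groups of equal rank are isomorphic. By construction $\varphi(\KQ M)$ is generated by the elements $c_g\mon{\psi(g)}$, so it equals $\KQ{\psi(M)}=\KQ N$. Since $\varphi$ is a field isomorphism carrying $\KQ M$ onto $\KQ N$, \cref{lem:rank-trdeg} gives
\[
  \rk(G/M)=\trdeg_{\KQ M}\KQ G=\trdeg_{\KQ N}\KQ H=\rk(H/N),
\]
with no restriction on the cardinals involved. For a free group the rank is its basis cardinality, so $G/M\cong H/N$. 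Therefore
\[
  G\cong M\oplus G/M\cong N\oplus H/N\cong H.
\]

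The delicate points are the equality $\varphi(\KQ M)=\KQ N$, which rests on $\varphi$ fixing $\Q$ pointwise, and the fact that everything passes through the uniqueness of monomial representations.
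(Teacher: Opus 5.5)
Your proposal is correct and follows the paper's proof essentially step for step. It uses the same subgroups $M$ and $N$, the same defect map $G\to\DQ H$ with kernel $M$ to get freeness and hence splitting of $G/M$ and $H/N$, the identification $\varphi(\KQ M)=\KQ N$, and \cref{lem:rank-trdeg} to match the ranks of the free complements; the only cosmetic difference is that you get $\psi(M)=N$ by building the inverse from $\varphi^{-1}$, where the paper checks surjectivity onto $N$ directly.
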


\begin{proof}
Every field isomorphism in characteristic zero fixes the prime field $\Q$.
Let
\[
  \Gamma_G=\Q^\times X^G,
  \qquad
  \Gamma_H=\Q^\times X^H.
\]
Define
\begin{align*}
  M&=\{g\in G:\varphi(\mon g)\in\Gamma_H\},\\
  N&=\{h\in H:\varphi^{-1}(\mon h)\in\Gamma_G\}.
\end{align*}
They are subgroups: for example, if $g_1,g_2\in M$, then
\[
  \varphi(\mon{g_1-g_2})
  =\varphi(\mon{g_1})\varphi(\mon{g_2})^{-1}\in\Gamma_H,
\]
and the verification for $N$ is identical.  For $g\in M$, there are unique
$c_g\in\Q^\times$ and $\theta(g)\in H$ such that
\[
  \varphi(\mon g)=c_g\mon{\theta(g)}.
\]
Uniqueness holds because
$c\mon h=c'\mon{h'}$ in $\Q[H]$ forces $h=h'$ by linear independence of
the standard monomial basis, and then $c=c'$.  Multiplying the formulas for
$g_1$ and $g_2$ shows that
\[
  c_{g_1+g_2}=c_{g_1}c_{g_2},
  \qquad
  \theta(g_1+g_2)=\theta(g_1)+\theta(g_2),
\]
so the exponent map $\theta:M\to H$ is a homomorphism.  It is injective: if
$\theta(g)=0$, then $\varphi(\mon g)\in\Q^\times$, and applying
$\varphi^{-1}$, which fixes $\Q$, shows that $\mon g\in\Q^\times$.  The
monomial basis of $\Q[G]$ then gives $g=0$.

We next prove $\theta(M)=N$ explicitly.  If $g\in M$ and
$h=\theta(g)$, then
\[
  \varphi^{-1}(\mon h)=c_g^{-1}\mon g\in\Gamma_G,
\]
so $h\in N$.  Conversely, if $h\in N$, write
$\varphi^{-1}(\mon h)=d\mon g$ with $d\in\Q^\times$ and $g\in G$.
Applying $\varphi$ gives
$\varphi(\mon g)=d^{-1}\mon h$; hence $g\in M$ and $\theta(g)=h$.
Therefore
\begin{equation}\label{eq:M-N-iso}
  \theta:M\xrightarrow{\ \sim\ }N.
\end{equation}

Consider the group homomorphism from the additive group $G$ to the
multiplicative group $\DQ H$,
\[
  \delta_\varphi:G\longrightarrow\DQ H,
  \qquad g\longmapsto[\varphi(\mon g)].
\]
It is a homomorphism because
\[
  \delta_\varphi(g_1+g_2)
  =[\varphi(\mon{g_1})\varphi(\mon{g_2})]
  =\delta_\varphi(g_1)\delta_\varphi(g_2).
\]
By the definition of the defect quotient,
\[
  \delta_\varphi(g)=1
  \quad\Longleftrightarrow\quad
  \varphi(\mon g)\in\Q^\times X^H
  \quad\Longleftrightarrow\quad g\in M.
\]
The first isomorphism theorem therefore embeds $G/M$ into $\DQ H$.
The latter is free by \cref{thm:defect-free}; hence $G/M$ is free by
\cref{lem:subgroup-free}.  Applying the same argument to $\varphi^{-1}$
embeds $H/N$ into $\DQ G$, and so $H/N$ is free as well.

Choose bases of these free quotients, indexed by sets $I$ and $J$:
\[
  G/M\cong\Z^{(I)},\qquad H/N\cong\Z^{(J)}.
\]
A free abelian group is projective: choosing arbitrary lifts of its basis
elements defines a section of any surjection onto it.  Thus the quotient
maps $G\to G/M$ and $H\to H/N$ split, and we obtain
\begin{equation}\label{eq:splittings}
  G\cong M\oplus\Z^{(I)},
  \qquad
  H\cong N\oplus\Z^{(J)}.
\end{equation}

The restriction of $\varphi$ identifies the two group subfields:
\begin{equation}\label{eq:subfields}
  \varphi(\KQ M)=\KQ N.
\end{equation}
Indeed, for every $g\in M$ the formula
$\varphi(\mon g)=c_g\mon{\theta(g)}$ shows that
$\varphi(\KQ M)\subseteq\KQ N$.  Conversely, if $h\in N$, the surjectivity
of $\theta$ supplies $g\in M$ with $h=\theta(g)$, and then
$\mon h=c_g^{-1}\varphi(\mon g)$ belongs to $\varphi(\KQ M)$.  Since the
monomials $X^h$, $h\in N$, together with $\Q$ generate $\KQ N$ as a field,
the reverse inclusion follows.

Equation \cref{eq:subfields} means that $\varphi$ is an isomorphism of the
field extensions
\[
  \KQ G/\KQ M\quad\text{and}\quad\KQ H/\KQ N.
\]
An isomorphism of field extensions takes an algebraically independent set
over the first base field to an algebraically independent set over the
second, and the same statement for $\varphi^{-1}$ gives equality of the
relative transcendence degrees.  Using \cref{lem:rank-trdeg} and
\cref{eq:splittings}, we obtain
\begin{align*}
  |I|
  &=\rk(G/M)\\
  &=\trdeg_{\KQ M}\KQ G\\
  &=\trdeg_{\KQ N}\KQ H\\
  &=\rk(H/N)\\
  &=|J|.
\end{align*}
Hence $\Z^{(I)}\cong\Z^{(J)}$.  Combining this with
\cref{eq:M-N-iso,eq:splittings} gives the explicit chain
\[
  G\cong M\oplus\Z^{(I)}
  \cong N\oplus\Z^{(J)}
  \cong H.
\]
\end{proof}

\begin{proof}[Proof of \cref{thm:intro-main}]
The forward implication is \cref{thm:reconstruction}.  A group isomorphism
$G\to H$ extends $\Q$-linearly to an isomorphism of group algebras and then
to an isomorphism of their fraction fields, proving the converse.
\end{proof}

\section{Stabilization and Rickard's Example}\label{sec:applications}

The reconstruction theorem does more than distinguish individual groups: it
recovers every free-stabilization relation.

\begin{corollary}[Exact stabilization profile]\label{cor:stabilization}
Let $G,H$ be torsion-free abelian groups and let $I,J$ be arbitrary sets.
For algebraically independent families $(t_i)_{i\in I}$ and
$(u_j)_{j\in J}$,
\[
  \KQ G(t_i:i\in I)\cong\KQ H(u_j:j\in J)
\]
if and only if
\[
  G\oplus\Z^{(I)}\cong H\oplus\Z^{(J)}.
\]
\end{corollary}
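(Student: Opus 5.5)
The plan is to reduce the corollary to \cref{thm:intro-main} by identifying each adjoined field with a group field. The key observation is that for any torsion-free abelian group $G$ and any set $I$ there is a canonical isomorphism
\[
  \KQ{G\oplus\Z^{(I)}}\cong\KQ G(t_i:i\in I),
\]
where the $t_i$ are algebraically independent over $\KQ G$. To see this, note that $\Q[G\oplus\Z^{(I)}]$ is generated over $\Q[G]$ by the monomials $\mon{e_i}^{\pm1}$, where $(e_i)$ is the standard basis of $\Z^{(I)}$. By \cref{lem:rank-trdeg}, applied with $H=G$ inside $G\oplus\Z^{(I)}$ (or directly by the algebraic-independence half of its proof, with $g_i=e_i$), the family $(\mon{e_i})$ is algebraically independent over $\KQ G$. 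Hence $\KQ{G\oplus\Z^{(I)}}=\KQ G(\mon{e_i}:i\in I)$ is a purely transcendental extension of $\KQ G$ on a basis of cardinality $|I|$.

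Any two purely transcendental extensions of the same field on algebraically independent families of the same cardinality are isomorphic over that field. One sees this by matching the variables bijectively and extending through polynomial rings to fraction fields. Consequently $\KQ G(t_i:i\in I)\cong\KQ{G\oplus\Z^{(I)}}$, and similarly $\KQ H(u_j:j\in J)\cong\KQ{H\oplus\Z^{(J)}}$. The groups $G\oplus\Z^{(I)}$ and $H\oplus\Z^{(J)}$ are torsion-free abelian.

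With these identifications both directions follow from \cref{thm:intro-main}. If the two stabilized fields are isomorphic, then $\KQ{G\oplus\Z^{(I)}}\cong\KQ{H\oplus\Z^{(J)}}$, and \cref{thm:reconstruction} gives $G\oplus\Z^{(I)}\cong H\oplus\Z^{(J)}$. Conversely, a group isomorphism between the stabilized groups induces a field isomorphism of their group fields by functoriality, and composing with the identifications gives the required isomorphism.

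The only point needing care is the interpretation of the hypothesis "algebraically independent families." It must mean independent over $\KQ G$ (respectively $\KQ H$), with the field generated being the free rational function field. I will state this explicitly and note that, under this reading, the identification step is the whole content. I do not expect a real obstacle: all the depth lies in \cref{thm:reconstruction}.
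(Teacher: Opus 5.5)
Your proof is correct and follows the paper's own argument: identify $\KQ G(t_i:i\in I)$ with $\KQ{G\oplus\Z^{(I)}}$, and likewise for $H$ and $J$, then apply \cref{thm:intro-main} in both directions. Your observation that infinite $I$ needs the algebraic-independence half of the proof of \cref{lem:rank-trdeg}, not just the transcendence-degree count in its statement, correctly fills in the step the paper compresses into ``natural isomorphisms.''
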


\begin{proof}
There are natural isomorphisms
\[
  \KQ{(G\oplus\Z^{(I)})}\cong\KQ G(t_i:i\in I)
\]
and similarly for $H$ and $J$.  Apply \cref{thm:intro-main}.
\end{proof}

We finish with the motivating application.  Let
\[
  \mathcal O=\Z[\sqrt2],
  \qquad
  A=\left\{(a_n)_{n\geq0}\in\mathcal O^{\N}:
  \sup_n|a_n|<\infty\right\},
\]
where $\mathcal O$ is embedded in $\mathbb R$ in the usual way.  Rickard
proved that this torsion-free abelian group satisfies
\[
  A\cong A\oplus\Z^2,
  \qquad
  A\not\cong A\oplus\Z
\]
\cite{Rickard2020}.

\begin{corollary}[Dimension-two field stabilization]\label{cor:field-example}
For $F=\KQ A$ and algebraically independent $x,y$ over $F$,
\[
  F\cong F(x,y),
  \qquad
  F\not\cong F(x).
\]
\end{corollary}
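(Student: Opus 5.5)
The plan is to reduce both assertions to \cref{cor:stabilization}, combined with Rickard's two group-theoretic facts $A\cong A\oplus\Z^2$ and $A\not\cong A\oplus\Z$ from \cite{Rickard2020}.

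First, identify the fields with group fields. Since $x,y$ are algebraically independent over $F=\KQ A$, the natural isomorphism from the proof of \cref{cor:stabilization} gives $F(x,y)\cong\KQ{(A\oplus\Z^2)}$. It sends $\mon{(a,m,n)}\mapsto\mon a x^m y^n$. Likewise $F(x)\cong\KQ{(A\oplus\Z)}$. Concretely, the map $\Q[A\oplus\Z^{(I)}]\to \KQ A(t_i)$ is injective by the algebraic independence half of \cref{lem:rank-trdeg} applied with $H=A$. Its image generates the target field, so it extends to an isomorphism of fraction fields.

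For the positive assertion, we apply \cref{cor:stabilization} with $G=H=A$, $I=\varnothing$ and $|J|=2$. The isomorphism $A\cong A\oplus\Z^2$ gives $F=\KQ A\cong\KQ A(u_1,u_2)=F(x,y)$. Alternatively, this follows directly from the functorial converse in \cref{thm:intro-main}.

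For the negative assertion, suppose $F\cong F(x)$. Then $\KQ A\cong\KQ{(A\oplus\Z)}$, and \cref{thm:reconstruction} yields $A\cong A\oplus\Z$. This contradicts Rickard's theorem.

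Neither step is genuinely hard. The only point needing care is that $A$ is torsion-free, which is clear since $A\subseteq\mathcal O^{\N}\subseteq\mathbb R^{\N}$. The deep input is entirely in \cref{thm:reconstruction} and in Rickard's result, which we cite rather than reprove.
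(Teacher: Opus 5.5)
Your proposal is correct and follows the paper's proof exactly: $F\cong F(x,y)$ comes from $A\cong A\oplus\Z^2$ via \cref{cor:stabilization}, and $F\not\cong F(x)$ holds because $\KQ A\cong\KQ{A\oplus\Z}$ would give $A\cong A\oplus\Z$ by \cref{thm:reconstruction}, contradicting Rickard. One small correction: injectivity of $\Q[A\oplus\Z^{(I)}]\to\KQ A(t_i)$ comes from the hypothesis that the $t_i$ are algebraically independent over $\KQ A$, not from \cref{lem:rank-trdeg}, though either fact yields the natural isomorphism that the paper takes for granted.
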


\begin{proof}
The isomorphism $A\cong A\oplus\Z^2$ and
\cref{cor:stabilization} give $F\cong F(x,y)$.  If $F\cong F(x)$, then
\cref{thm:intro-main} applied to
\[
  \KQ A\cong F(x)\cong\KQ{(A\oplus\Z)}
\]
would imply $A\cong A\oplus\Z$, contrary to Rickard's theorem.
\end{proof}

\section{Scope and Further Questions}

The argument reconstructs the abstract isomorphism type of $G$ from the
abstract field $\KQ G$.  It does not assert that the standard monomial group
$\Q^\times X^G$ is definable without parameters, nor does it construct a
canonical copy of $G$ inside the field.  What is canonical in the proof is
the comparison of two monomial frames after an isomorphism has been given.

The torsion-free hypothesis is also intrinsic to the construction: if $G$
has nontrivial torsion, the rational group algebra generally has zero
divisors, so its ordinary fraction field is unavailable.  Finally,
\cref{rem:char-p} shows that the central freeness theorem fails in positive
characteristic.  Determining whether the corresponding reconstruction
statement nevertheless holds over finite fields requires a different
invariant.

\section*{Acknowledgements}

The authors thank Jeremy Rickard for formulating the rational group-field
isomorphism question and for the bounded-sequence group used in
\cref{cor:field-example}.  Warren May's work supplied the historical context
for the multiplicative-group freeness phenomenon.

The discovery and structural organization of the proofs and the navigation of
the literature were completed entirely by OpenAI's ChatGPT and Codex.  LaTeX
typesetting and text revision were carried out jointly by ChatGPT, Codex, and
the authors.  All mathematical arguments were independently verified by the
authors.  ChatGPT and Codex are neither authors nor referees.


\begingroup
\footnotesize
\begin{thebibliography}{99}
\setlength{\itemsep}{0.2em}

\bibitem[BV88]{BastosViswanathan1988}
G.~G. Bastos and T.~M. Viswanathan,
\newblock Torsion-free abelian groups, valuations and twisted group rings,
\newblock \emph{Canadian Mathematical Bulletin} \textbf{31} (1988), no.~2,
139--146,
\newblock
\href{https://doi.org/10.4153/CMB-1988-021-x}{doi:10.4153/CMB-1988-021-x}.

\bibitem[May72]{May1972}
W.~May,
\newblock Multiplicative groups of fields,
\newblock \emph{Proceedings of the London Mathematical Society} (3)
\textbf{24} (1972), 295--306,
\newblock
\href{https://doi.org/10.1112/plms/s3-24.2.295}{doi:10.1112/plms/s3-24.2.295}.

\bibitem[May79]{May1979}
W.~May,
\newblock Multiplicative groups under field extension,
\newblock \emph{Canadian Journal of Mathematics} \textbf{31} (1979), no.~2,
436--440,
\newblock
\href{https://doi.org/10.4153/CJM-1979-047-5}{doi:10.4153/CJM-1979-047-5}.

\bibitem[Ric17]{RickardMO2017}
J.~Rickard,
\newblock The field of fractions of the rational group algebra of a
torsion-free abelian group,
\newblock MathOverflow question 259117 (2017),
\newblock
\href{https://mathoverflow.net/questions/259117/}{mathoverflow.net/questions/259117}.

\bibitem[Ric20]{Rickard2020}
J.~Rickard,
\newblock Pathological abelian groups: A friendly example,
\newblock \emph{Journal of Algebra} \textbf{558} (2020), 640--645,
\newblock
\href{https://doi.org/10.1016/j.jalgebra.2019.10.014}{doi:10.1016/j.jalgebra.2019.10.014};
\newblock
\href{https://arxiv.org/abs/1904.09327}{arXiv:1904.09327}.

\end{thebibliography}
\endgroup
\end{document}